\numberwithin{equation}{section}
\newtheorem{theorem}{Theorem}[section]
\newtheorem{lemma}[theorem]{Lemma}
\theoremstyle{remark}
\newtheorem*{remark}{Remark}
\newtheorem*{example}{Example}
\newcommand{\R}{\mathbb{R}}
\newcommand{\X}{{\bf X}}
\newcommand{\A}{{\bf A}}
\newcommand{\D}{{\bf D}}
\newcommand{\B}{{\bf B}}
\newcommand{\I}{{\bf I}}
\newcommand{\RR}{{\bf R}}
\newcommand{\W}{{\bf W}}
\newcommand{\Z}{{\bf Z}}
\newcommand{\Y}{{\bf Y}}
\newcommand{\ee}{{\bf e}}
\newcommand{\vect}[1]{\boldsymbol{#1}}
\DeclareMathOperator{\Tr}{Tr}
\DeclareMathOperator{\E}{\mathbb{E}}
\DeclareMathOperator{\Pb}{\mathbb{P}}
\DeclareMathOperator{\one}{\mathds{1}}
\begin{document}

\vspace{1in}

\title[SEMICIRCLE LAW]{\bf SEMICIRCLE LAW FOR A CLASS OF RANDOM MATRICES WITH DEPENDENT ENTRIES}


\author[F. G{\"o}tze]{F. G{\"o}tze}
\address{F. G{\"o}tze\\
 Faculty of Mathematics\\
 Bielefeld University \\
 Bielefeld, Germany
}
\email{goetze@math.uni-bielefeld.de}

\author[A. Naumov]{A. Naumov}
\address{A. Naumov\\
 Faculty of Mathematics\\
 Bielefeld University \\
 Bielefeld, Germany \\
 and Faculty of Computational Mathematics and Cybernetics\\
 Moscow State University \\
 Moscow, Russia
 }
\email{naumovne@gmail.com, anaumov@math.uni-bielefeld.de}

\author[A. Tikhomirov]{A. Tikhomirov}
\address{A. Tikhomirov\\
 Department of Mathematics\\
 Komi Research Center of Ural Branch of RAS \\
 Syktyvkar, Russia
 }
\email{tichomir@math.uni-bielefeld.de}
\thanks{All authors are supported by CRC 701 ``Spectral Structures and Topological
Methods in Mathematics'', Bielefeld. A.~Tikhomirov are partially supported by RFBR, grant  N 11-01-00310-a and grant N 11-01-12104-ofi-m-2011, and program of UD RAS No12-P-1-1013. A.~Naumov has been supported by the German Research Foundation (DFG) through the International Research Training Group IRTG 1132}

\keywords{Random matrices, semicircle  law, Stieltjes transform}

\date{\today}

\begin{abstract}
In this paper we study ensembles of random symmetric matrices $\X_n = \{X_{ij}\}_{i,j = 1}^n$ with a random field type dependence, such that $\E X_{ij} = 0$, $\E X_{ij}^2 = \sigma_{ij}^2$, where $\sigma_{ij}$
can be different numbers. Assuming that the average of the normalized sums of variances in each row converges to one and Lindeberg condition holds true we prove that the empirical spectral distribution of eigenvalues converges to
Wigner's semicircle  law.
\end{abstract}

\maketitle
\tableofcontents


\newpage

\section{Introduction}

Let $X_{jk}, 1 \le j \le k < \infty$, be triangular array of random variables with $\E X_{j k} = 0$ and $\E X_{jk}^2 = \sigma_{j k}^2$, and let $X_{jk} = X_{kj}$ for $1 \le j < k < \infty$. We consider the random matrix
$$
\X_n = \{ X_{jk} \}_{j,k=1}^n.
$$
Denote by $\lambda_1 \le ... \le \lambda_n$ eigenvalues of matrix $n^{-1/2} \X_n$ and define its spectral distribution function by
$$
\mathcal F^{\X_n}(x) = \frac{1}{n} \sum_{i = 1}^n \one(\lambda_i \le x),
$$
where $\one(B)$ denotes the indicator of an event $B$. We set $F^{\X_n}(x):=\E \mathcal F^{\X_n}(x)$.
Let $g(x)$ and $G(x)$ denote the density and the distribution function of the standard semicircle  law
\begin{equation*}
g(x) = \frac{1}{2\pi} \sqrt{4 - x^2} \one(|x| \le 2), \quad G(x) = \int_{-\infty}^x g(u) du.
\end{equation*}
For matrices with independent identically distributed (i.i.d.) elements, which have moments of all orders, Wigner proved in~\cite{Wigner1958} that $F_n$ converges to $G(x)$, later on called ``Wigner's semicircle  law``. The result has been extended in various aspects, i.e. by Arnold in~\cite{Arnold1971}. In the non-i.i.d. case Pastur,~\cite{Pastur1973}, showed that Lindeberg's condition is sufficient for the  convergence. In~\cite{GotTikh2006} G{\"o}tze and Tikhomirov proved the semicircle law for matrices satisfying martingale-type conditions for the entries.

In the majority of previous papers it has been assumed that $\sigma_{ij}^2$ are equal for all $1 \le i < j \le n$.
Recently Erd{\H o}s, Yau and Yin and al. study ensembles of symmetric random matrices with independent elements which satisfy
$n^{-1}\sum_{j=1}^n \sigma_{ij}^2 = 1$ for all $1 \le i \le n$. See for example the survey of results in~\cite{Erdos2010}.

In this paper we study a class of random matrices with dependent entries and show that limiting distribution for $F^{\X_n}(x)$ is given by Wigner's semicircle  law. We do not assume that the variances are equal.

Introduce the $\sigma$-algebras
$$
\mathfrak{F}^{(i,j)} := \sigma\{X_{k l}: 1 \le k \le l \le n,  (k,l) \neq (i,j) \}, 1 \le i \le j \le n.
$$
For any $\tau > 0$ we introduce Lindeberg's ratio for random matrices as
\begin{equation*} \label{eq: LF}
L_n(\tau):=\frac{1}{n^2} \sum_{i,j = 1}^n \E |X_{ij}|^2 \one(|X_{ij}| \geq \tau \sqrt n).
\end{equation*}
We assume that the following conditions hold
\begin{align}
&\E (X_{i j} | \mathfrak{F}^{(i,j)}) = 0; \label{eq:mrtingale prop}\\
&\frac{1}{n^2} \sum_{i,j = 1}^n \E|\E(X_{ij}^2|\mathfrak{F}^{(i,j)}) - \sigma_{ij}^2| \rightarrow 0 \text{ as } n \rightarrow \infty; \label{eq: variance prop}\\
&\text{for any fixed } \tau > 0 \quad L_n(\tau) \rightarrow 0 \text{ as } n \rightarrow \infty. \label{eq: limit of Lind. ratio}
\end{align}
Furthermore, we will use condition~\eqref{eq: limit of Lind. ratio} not only for the matrix $\X_n$, but for other matrices as well, replacing $X_{ij}$ in the definition of Lindeberg's ratio by corresponding elements.

For all $1 \le i \le n$ let $B_i^2 := \frac{1}{n} \sum_{j=1}^n \sigma_{ij}^2$. We need to impose additional conditions on the variances $\sigma_{ij}^2$ given by
\begin{align}\label{eq: variance prop 2}
&\frac{1}{n}\sum_{i=1}^n |B_i^2 - 1| \rightarrow 0 \text{ as } n \rightarrow \infty;\\
&\max_{ 1 \le i \le n} B_i \le C \label{eq: variance prop 3},
\end{align}
where $C$ is some absolute constant.
\begin{remark}
It is easy to see that the conditions~\eqref{eq: variance prop 2} and~\eqref{eq: variance prop 3} follow from the following condition
\begin{equation} \label{eq: variance prop 2b}
\max_{1 \le i \le n} \left|B_i^2 - 1 \right| \rightarrow 0 \text{ as } n \rightarrow \infty.
\end{equation}
\end{remark}
The main result of the paper is the following theorem
\begin{theorem} \label{th:main}
Let $\X_n$ satisfy conditions~\eqref{eq:mrtingale prop}--\eqref{eq: variance prop 3}.
Then
$$
\sup_{x}|F^{\X_n}(x) - G(x)| \rightarrow 0 \text{ as } n \rightarrow \infty.
$$
\end{theorem}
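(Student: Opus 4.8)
The plan is to argue via the Stieltjes transform, as in~\cite{GotTikh2006}. Set $\W_n:=n^{-1/2}\X_n$, $R(z):=(\W_n-z\I)^{-1}$ for $z=u+iv$, $v>0$, and $s_n(z):=\frac1n\Tr R(z)=\frac1n\sum_{k=1}^n R_{kk}(z)$, so that $\E s_n$ is the Stieltjes transform of $F^{\X_n}$; let $s(z)$ be the Stieltjes transform of $G$, i.e.\ the root of $s(z)^2+z\,s(z)+1=0$ with $\imag s(z)>0$. Since $G$ is continuous, by P\'olya's theorem it suffices to prove $F^{\X_n}\Rightarrow G$ weakly, hence to prove $\E s_n(z)\to s(z)$ for every $z$ with $\imag z>0$; and since $\{\E s_n\}$ is locally bounded (by $1/v$) and so a normal family on the upper half-plane, Vitali's theorem lets us restrict to $\imag z\ge v_0$ for one fixed $v_0>1$. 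First I would truncate: replace $X_{ij}$ by $\widetilde X_{ij}:=X_{ij}\one(|X_{ij}|\le\tau_n\sqrt n)-\E[X_{ij}\one(|X_{ij}|\le\tau_n\sqrt n)\mid\mathfrak F^{(i,j)}]$ with $\tau_n\to0$ slowly enough that $L_n(\tau_n)\to0$; centering by the \emph{conditional} mean preserves~\eqref{eq:mrtingale prop}, the bound $|s_n^{\W_n}(z)-s_n^{\widetilde\W_n}(z)|\le(\sqrt n\,v^2)^{-1}\|\W_n-\widetilde\W_n\|_F$ together with $\E\|\W_n-\widetilde\W_n\|_F^2=o(n)$ gives an $o(1)$ change of $s_n$ uniformly on $\{\imag z\ge v_0\}$, conditions~\eqref{eq: variance prop}--\eqref{eq: variance prop 3} are preserved up to $o(1)$, and — crucially — the truncated variances satisfy $\sigma_{ij}^2\le\E X_{ij}^2\one(|X_{ij}|\le\tau_n\sqrt n)\le\tau_n^2 n$. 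So we may assume $|X_{ij}|\le\tau_n\sqrt n$ and $\max_{i,j}\sigma_{ij}^2\le\tau_n^2 n$.

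Next comes a self-consistent equation for $r:=(R_{11},\dots,R_{nn})^{\top}$. For each $k$ let $\W_n^{(k)}$ be the principal submatrix with row and column $k$ deleted, $R^{(k)}$ its resolvent, $\mathfrak M_k$ the $\sigma$-algebra generated by the entries of $\W_n^{(k)}$, and $a_k$ the $k$-th column of $\X_n$ without its $k$-th coordinate; note $R^{(k)}$ is $\mathfrak M_k$-measurable and $\mathfrak M_k\subseteq\mathfrak F^{(k,i)}$ for every $i\neq k$. The Schur complement formula gives $R_{kk}=(-z+n^{-1/2}X_{kk}-n^{-1}a_k^{*}R^{(k)}a_k)^{-1}$. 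By~\eqref{eq:mrtingale prop} and the tower property the coordinates of $a_k$ are conditionally centred and uncorrelated given $\mathfrak M_k$, so $\E[n^{-1}a_k^{*}R^{(k)}a_k\mid\mathfrak M_k]=n^{-1}\sum_{i\neq k}\E[X_{ki}^2\mid\mathfrak M_k]R^{(k)}_{ii}$, which by~\eqref{eq: variance prop} equals $n^{-1}\sum_{i\neq k}\sigma_{ki}^2R^{(k)}_{ii}$ up to an error small in $L^1$ on average over $k$. Passing from $R^{(k)}$ to $R$ via the rank-one resolvent identity costs a term of modulus $\le|R_{kk}|\,n^{-2}\|a_k\|^2\|R^{(k)}\|^2\max_i\sigma_{ki}^2\le\tau_n^2\|a_k\|^2/(v^3n)$, of expectation $\tau_n^2 B_k^2/v^3$, hence $o(1)$ on average by~\eqref{eq: variance prop 2}; and $\frac1{n^2}\sum_i\sigma_{ii}^2\le\tau_n^2+L_n(\tau_n)\to0$ (a consequence of~\eqref{eq: limit of Lind. ratio}) disposes of $n^{-1/2}X_{kk}$ and the $i=k$ term. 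With $S:=(\sigma_{ij}^2/n)_{i,j=1}^n$ this yields
\begin{equation*}
R_{kk}=\frac1{-z-(Sr)_k+\xi_k},\qquad\text{where it remains to show}\quad\E\,\frac1n\sum_{k=1}^n|\xi_k|\to0 .
\end{equation*}

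The bound $\E\frac1n\sum_k|\xi_k|\to0$ is the main work. Beyond the deterministic terms above, $\xi_k$ contains two fluctuations. The off-diagonal one, $n^{-1}\sum_{i\neq j}X_{ki}X_{kj}R^{(k)}_{ij}$, has zero conditional mean given $\mathfrak M_k$; expanding its square, \eqref{eq:mrtingale prop} (applied to the index of highest multiplicity) kills all but the pair-matched terms, and using $X_{kj}^2\le\tau_n^2 n$, the Ward identity $\sum_j|R^{(k)}_{ij}|^2=v^{-1}\imag R^{(k)}_{ii}$ and $\sum_i\sigma_{ki}^2=nB_k^2$, its conditional second moment is $\le 2\tau_n^2 n^2 B_k^2/v^2$, so its $L^2$-norm (after the $n^{-1}$) is $O(\tau_n B_k/v)$, hence $o(1)$ on average by~\eqref{eq: variance prop 2}. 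The diagonal one, $n^{-1}\sum_{i\neq k}(X_{ki}^2-\E[X_{ki}^2\mid\mathfrak M_k])R^{(k)}_{ii}$, is handled by writing $X_{ki}^2=V_{ki}+(X_{ki}^2-V_{ki})$ with $V_{ki}:=\E[X_{ki}^2\mid\mathfrak F^{(k,i)}]$: by~\eqref{eq: variance prop} the contribution of $V_{ki}-\E[V_{ki}\mid\mathfrak M_k]$ is $o(1)$ on average, while $\sum_{i}(X_{ki}^2-V_{ki})R^{(k)}_{ii}$, ordered in $i$, is a martingale-difference sum (since $\mathfrak M_k\vee\sigma(X_{k i_1},\dots,X_{k i_{m-1}})\subseteq\mathfrak F^{(k,i_m)}$ and $\E[X_{ki}^2-V_{ki}\mid\mathfrak F^{(k,i)}]=0$), whose variance is $\le v^{-2}\sum_i\E[X_{ki}^4\mid\mathfrak M_k]\le \tau_n^2 n\,v^{-2}\sum_i\E[X_{ki}^2\mid\mathfrak M_k]$, again $O(\tau_n^2)$ on average after the $n^{-1}$. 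Together these give $\E\frac1n\sum_k|\xi_k|\to0$. I expect this step, together with the bookkeeping needed to propagate~\eqref{eq:mrtingale prop}--\eqref{eq: variance prop 3} through the conditional truncation, to be the most technical part of the proof.

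Finally, the self-consistent equation must be inverted stably, which is where the symmetry $\sigma_{ij}^2=\sigma_{ji}^2$ (so that $S$ has column sums $B_j^2$ as well) and the approximate row-stochasticity~\eqref{eq: variance prop 2} enter. For $\imag z\ge v_0>1$ we have $|R_{kk}|\le1/v$, so $|{-z-(Sr)_k+\xi_k}|=|R_{kk}|^{-1}\ge v$, and $|z+s(z)|=|s(z)|^{-1}\ge v$. Subtracting $s=(-z-s)^{-1}$ and using $(Sr)_k-s=(S(r-s\mathbf 1))_k+s(B_k^2-1)$ gives
\begin{equation*}
R_{kk}-s=\frac{(S(r-s\mathbf 1))_k+s(B_k^2-1)-\xi_k}{(-z-(Sr)_k+\xi_k)\,(-z-s)},\qquad |R_{kk}-s|\le\frac2v .
\end{equation*}
Summing absolute values over $k$ and using $\frac1n\sum_k|(S(r-s\mathbf 1))_k|\le\frac1n\sum_j|R_{jj}-s|\,B_j^2\le\frac1n\sum_j|R_{jj}-s|+\frac2v\cdot\frac1n\sum_j|B_j^2-1|$, we obtain, with $\varepsilon_n:=(\tfrac2v+|s(z)|)\tfrac1n\sum_j|B_j^2-1|\to0$ by~\eqref{eq: variance prop 2},
\begin{equation*}
\frac1n\sum_{k=1}^n|R_{kk}-s|\le\frac1{v^2}\,\frac1n\sum_{k=1}^n|R_{kk}-s|+\frac1{v^2}\Bigl(\varepsilon_n+\frac1n\sum_{k=1}^n|\xi_k|\Bigr).
\end{equation*}
Since $v>1$, taking expectations gives $\E\frac1n\sum_k|R_{kk}-s(z)|\to0$, hence $|\E s_n(z)-s(z)|\le\E\frac1n\sum_k|R_{kk}-s(z)|\to0$ for $\imag z\ge v_0$; by the reductions of the first paragraph this proves Theorem~\ref{th:main}.
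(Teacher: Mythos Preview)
Your argument is correct, but it follows a genuinely different route from the paper's. The paper proves Theorem~\ref{th:main} in two steps: first a Lindeberg-type interpolation (Theorem~\ref{th:universality}), rotating $\X$ continuously into an independent Gaussian matrix $\Y$ with the same variance profile via $\Z(\varphi)=\X\cos\varphi+\Y\sin\varphi$ and bounding $\partial_\varphi S(z,\varphi)$ by Taylor-expanding the resolvent in each entry; second, for the Gaussian matrix, a combinatorial moment computation (Theorem~\ref{th: gaussian case}) that classifies $\Gamma(k,t)$-graphs and uses~\eqref{eq: variance prop 2}--\eqref{eq: variance prop 3} to reduce the tree contributions to the Catalan numbers. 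You instead derive a self-consistent equation for the diagonal resolvent entries directly from the Schur complement, control the fluctuation $\xi_k$ using the martingale structure~\eqref{eq:mrtingale prop} (conditioning on $\mathfrak F^{(k,i)}\supseteq\mathfrak M_k$) together with the Ward identity, and close the equation by a contraction argument on $\{\imag z>1\}$ that exploits the symmetry of $S=(\sigma_{ij}^2/n)$ and~\eqref{eq: variance prop 2}.

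What each approach buys: the paper's decomposition is modular --- the dependence structure is isolated in the universality step, and the variance profile is handled separately in the Gaussian case --- and the Lindeberg rotation is a portable tool. Your approach is more direct, avoids the graph combinatorics entirely, and stays closer to quantitative resolvent bounds in the spirit of~\cite{GotTikh2006}. Notably, your contraction step uses only~\eqref{eq: variance prop 2} (through $\frac1n\sum_j|R_{jj}-s|\,B_j^2\le\frac1n\sum_j|R_{jj}-s|+\frac2v\,\frac1n\sum_j|B_j^2-1|$) and the uniform truncation bound $\max_{i,j}\sigma_{ij}^2\le C\tau_n^2 n$; the hypothesis~\eqref{eq: variance prop 3} does not seem to be invoked anywhere in your argument, which --- if you check the bookkeeping carefully --- would confirm the conjecture stated in the paper that~\eqref{eq: variance prop 3} is superfluous. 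In the paper's proof, by contrast, \eqref{eq: variance prop 3} is used repeatedly in the moment method (every time a leaf is peeled off a tree) and appears hard to remove from that argument.
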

Let us fix $i, j$. It is easy to see that for all $(k,l) \neq (i,j)$
$$
\E X_{i j} X_{k l} = \E \E (X_{i j} X_{k l} |\mathfrak{F}^{(i,j)})) = \E X_{k l} \E  (X_{i j} |\mathfrak{F}^{(i,j)}) = 0.
$$
Hence the elements of the matrix $\X_n$ are uncorrelated. If we additionally assume that the elements of the matrix $\X_n$ are independent random variables then conditions~\eqref{eq:mrtingale prop} and~\eqref{eq: variance prop} are automatically satisfied. The following
theorem follows immediately from Theorem~\ref{th:main} in the case when the matrix $\X_n$ has independent entries.
\begin{theorem} \label{th:indep case}
Assume that the elements $X_{ij}$ of the matrix $\X_n$ are independent for all $1 \le i \le j \le n$ and $\E X_{ij} = 0$, $\E X_{ij}^2 = \sigma_{ij}^2$. Assume that $\X_n$ satisfies
conditions~\eqref{eq: limit of Lind. ratio}--\eqref{eq: variance prop 3}.
Then
$$
\sup_{x}|F^{\X_n}(x) - G(x)| \rightarrow 0 \text{ as } n \rightarrow \infty.
$$
\end{theorem}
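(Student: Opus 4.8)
The plan is to deduce Theorem~\ref{th:indep case} directly from Theorem~\ref{th:main}, by verifying that under the independence hypothesis the two ``structural'' conditions~\eqref{eq:mrtingale prop} and~\eqref{eq: variance prop} are automatically in force; the remaining conditions~\eqref{eq: limit of Lind. ratio}--\eqref{eq: variance prop 3} are assumed outright in the statement, so nothing has to be checked for them.

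First I would fix $1 \le i \le j \le n$ and observe that, by definition, $\mathfrak{F}^{(i,j)}$ is generated by the family $\{X_{kl} : 1 \le k \le l \le n,\ (k,l) \neq (i,j)\}$. Since the entries $X_{kl}$ over the upper-triangular index set $1 \le k \le l \le n$ are assumed independent, the single variable $X_{ij}$ is independent of the whole $\sigma$-algebra $\mathfrak{F}^{(i,j)}$ (not merely uncorrelated with its generators). Hence
$$
\E(X_{ij} \mid \mathfrak{F}^{(i,j)}) = \E X_{ij} = 0 \quad \text{a.s.},
$$
which is exactly~\eqref{eq:mrtingale prop}, and likewise
$$
\E(X_{ij}^2 \mid \mathfrak{F}^{(i,j)}) = \E X_{ij}^2 = \sigma_{ij}^2 \quad \text{a.s.}
$$
Therefore every summand in the average appearing in~\eqref{eq: variance prop} equals $\E\bigl|\E(X_{ij}^2\mid\mathfrak{F}^{(i,j)}) - \sigma_{ij}^2\bigr| = 0$, so that quantity is identically $0$ for each $n$ and~\eqref{eq: variance prop} holds trivially.

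Consequently $\X_n$ satisfies all of~\eqref{eq:mrtingale prop}--\eqref{eq: variance prop 3}, and a direct application of Theorem~\ref{th:main} gives $\sup_x |F^{\X_n}(x) - G(x)| \to 0$ as $n \to \infty$, which is the assertion. I do not expect a genuine obstacle here; the only point that deserves a line of care is matching the range $1 \le i \le j \le n$ over which independence is postulated with the upper-triangular indexing $1 \le k \le l \le n$ used to define $\mathfrak{F}^{(i,j)}$, so that the conditioning $\sigma$-algebra really is independent of $X_{ij}$ and the conditional expectations collapse to ordinary expectations.
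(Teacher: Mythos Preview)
Your proposal is correct and matches the paper's own argument exactly: the paper simply remarks that under independence, conditions~\eqref{eq:mrtingale prop} and~\eqref{eq: variance prop} hold automatically, whence Theorem~\ref{th:indep case} follows immediately from Theorem~\ref{th:main}. Nothing more is needed.
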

The following example illustrates that without condition~\eqref{eq: variance prop 2} convergence to  Wigner's semicircle  law doesn't hold.
\begin{example}
Let $\X_n$ denote a block matrix
\begin{equation*}
\X_n = \begin{pmatrix}
\A  & \B\\
\B^T  & \D\\
\end{pmatrix},
\end{equation*}
where $\A$ is $m \times m$ symmetric random matrix with Gaussian elements with zero mean and unit variance, $\B$ is $m \times (n-m)$ random matrix with i.i.d. Gaussian elements with zero mean and unit variance. Furthermore, let $\D$ be a $(n-m)\times (n-m)$ diagonal matrix with Gaussian random variables on the diagonal with zero mean and unit variance. If we set $m:=n/2$ then it is not difficult to check that condition~\eqref{eq: variance prop 2} doesn't hold. We simulated  the spectrum of the matrix $\X_n$ and illustrated a limiting distribution on Figure~\ref{fig:graph2}.
\begin{figure}
\begin{center}
\scalebox{.35}{\includegraphics{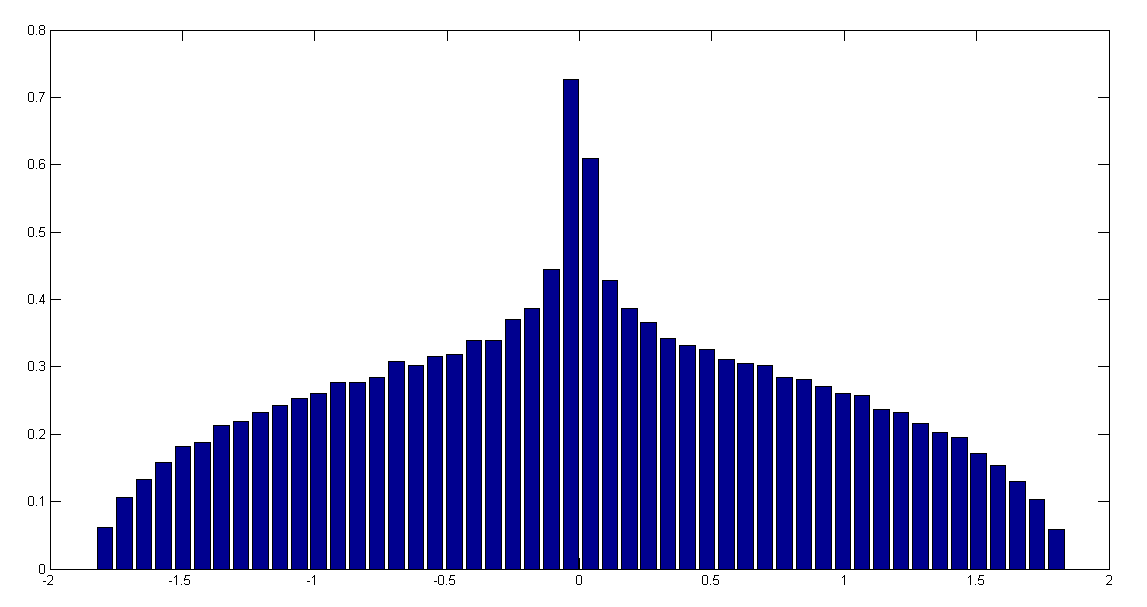}}
\end{center}
\caption{Spectrum of matrix $\X_n$.}
\label{fig:graph2}
\end{figure}
\end{example}
\begin{remark}
We conjecture that Theorem~\ref{th:main} (Theorem~\ref{th:indep case} respectively) holds without assumption~\eqref{eq: variance prop 3}.
\end{remark}
Define the Levy distance between the distribution functions $F_1$ and $F_2$ by
$$
L(F_1, F_2)= \inf\{\varepsilon > 0: F_1(x-\varepsilon) - \varepsilon \le F_2(x) \le F_1(x+\varepsilon) + \varepsilon \}.
$$
The following theorem formulates the Lindeberg's universality scheme for random matrices.
\begin{theorem} \label{th:universality}
Let $\X_n, \Y_n$ denote independent symmetric random matrices with $\E X_{ij} = \E Y_{ij} = 0$ and $\E X_{ij}^2 = \E Y_{ij}^2 = \sigma_{ij}^2$. Suppose that
the matrix $\X_n$ satisfies conditions~\eqref{eq:mrtingale prop}--\eqref{eq: variance prop 2}, and the matrix $\Y_n$ has independent Gaussian elements. Additionally assume that for the matrix $\Y_n$
conditions~\eqref{eq: limit of Lind. ratio}--\eqref{eq: variance prop 2} hold. Then
$$
L(F^{\X_n}(x), F^{\Y_n}(x)) \rightarrow 0 \text{ as } n \rightarrow \infty.
$$
\end{theorem}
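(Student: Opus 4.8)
The plan is to run the Lindeberg replacement (swapping) scheme directly at the level of the Stieltjes transform. For a symmetric $n\times n$ matrix $\M$ and $z$ with $v:=\imag z>0$ write $R_{\M}(z)=(n^{-1/2}\M-zI)^{-1}$ and $m_{\M}(z)=\tfrac1n\Tr R_{\M}(z)$. The second moment of $\mathcal F^{\X_n}$ equals $\tfrac1{n^2}\sum_{i,j}\sigma_{ij}^2=\tfrac1n\sum_i B_i^2$, which stays bounded by \eqref{eq: variance prop 2}, and the same holds for $\Y_n$, so $\{F^{\X_n}\}$ and $\{F^{\Y_n}\}$ are tight. Hence, by a standard subsequence argument together with the facts that a distribution is determined by its Stieltjes transform and that the Levy metric metrizes weak convergence, it suffices to prove
\[
\E m_{\X_n}(z)-\E m_{\Y_n}(z)\longrightarrow 0\qquad\text{for every fixed }z\text{ with }\imag z>0 .
\]

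First I would truncate, \emph{without} recentring. Using \eqref{eq: limit of Lind. ratio} for $\X_n$ and for $\Y_n$, choose $\tau_n\downarrow0$ slowly enough that $L_n(\tau_n)/\tau_n\to0$ (possible because $L_n(\tau)\to0$ for every fixed $\tau>0$), and set $\widehat X_{ij}:=X_{ij}\one(|X_{ij}|\le\tau_n\sqrt n)$, $\widehat Y_{ij}:=Y_{ij}\one(|Y_{ij}|\le\tau_n\sqrt n)$, with the associated symmetric matrices $\widehat\X_n,\widehat\Y_n$. Since $\widehat X_{ij}$ is a function of $X_{ij}$ alone, the entry $\widehat X_{kl}$ is $\mathfrak F^{(i,j)}$-measurable for every $(k,l)\ne(i,j)$. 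From Hoffman--Wielandt and $|(a-z)^{-1}-(b-z)^{-1}|\le|a-b|v^{-2}$ one has $|m_{\A}(z)-m_{\B}(z)|^{2}\le v^{-4}\tfrac1{n^2}\sum_{i,j}(A_{ij}-B_{ij})^{2}$ for symmetric $\A,\B$, whence $\E|m_{\X_n}(z)-m_{\widehat\X_n}(z)|^{2}\le v^{-4}L_n(\tau_n)\to0$ and likewise for $\Y_n$; thus it is enough to prove $\E m_{\widehat\X_n}(z)-\E m_{\widehat\Y_n}(z)\to0$. Truncation also provides the estimates used below, namely $\E\widehat X_{ij}^2\le\sigma_{ij}^2$, $\E|\widehat X_{ij}|^3\le\tau_n\sqrt n\,\sigma_{ij}^2$, and $\tfrac1{n^2}\sum_{i,j}\E\bigl|\E(\widehat X_{ij}^2\mid\mathfrak F^{(i,j)})-\sigma_{ij}^2\bigr|\to0$ (the last one from \eqref{eq: variance prop} and $L_n(\tau_n)\to0$), together with their analogues for $\widehat\Y_n$.

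Then I would carry out the swap. Enumerate the pairs $(i,j)$ with $1\le i\le j\le n$ as $\gamma=1,\dots,K$, $K=n(n+1)/2$, and let $\Z_\gamma$ be the symmetric matrix whose entries in the first $\gamma$ pairs come from $\widehat\Y_n$ and the rest from $\widehat\X_n$, so that $\Z_0=\widehat\X_n$, $\Z_K=\widehat\Y_n$ and
\[
\E m_{\widehat\X_n}(z)-\E m_{\widehat\Y_n}(z)=\sum_{\gamma=1}^{K}\bigl(\E m_{\Z_{\gamma-1}}(z)-\E m_{\Z_\gamma}(z)\bigr).
\]
Fix $\gamma$ with associated pair $(i,j)$: the matrices $\Z_{\gamma-1}$ and $\Z_\gamma$ coincide off that pair, their common part $\W$ being measurable with respect to $\mathfrak F^{(i,j)}\vee\sigma(\widehat\Y_n)$, and they carry $\widehat X_{ij}$ resp.\ $\widehat Y_{ij}$ in the $(i,j)$ and $(j,i)$ slots. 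Viewing $m(z)$ as a function of the scalar $x$ placed in those slots, i.e.\ of $\W+n^{-1/2}x(\ee_i\ee_j^{T}+\ee_j\ee_i^{T})$ (rank one if $i=j$), the resolvent identity and $\|R\|\le v^{-1}$ give the deterministic bounds $|\partial_x^k m(z)|\le C_k\,n^{-(k+2)/2}v^{-(k+1)}$ for $k=1,2,3$, uniformly in $x\in\R$. A third-order Taylor expansion of $m$ at $x=0$ then shows that at each $\gamma$ the zeroth-order terms cancel; the first-order term $\partial_x m|_{0}\cdot\widehat X_{ij}$ has $\partial_x m|_{0}$ measurable with respect to $\mathfrak F^{(i,j)}\vee\sigma(\widehat\Y_n)$, so by \eqref{eq:mrtingale prop} and independence of $\widehat\Y_n$ from $\X_n$ its expectation equals $-\E\bigl[\partial_x m|_{0}\,\E\bigl(X_{ij}\one(|X_{ij}|>\tau_n\sqrt n)\mid\mathfrak F^{(i,j)}\bigr)\bigr]$, which summed over $\gamma$ is $O\bigl(v^{-2}L_n(\tau_n)/\tau_n\bigr)\to0$, while the $\widehat Y_{ij}$ first-order terms vanish up to an error of the same type (using $\widehat Y_{ij}\perp\W$); the second-order terms equal $\tfrac12\E\bigl[\partial_x^2 m|_{0}\,\E(\widehat X_{ij}^2\mid\mathfrak F^{(i,j)})\bigr]$ resp.\ $\tfrac12\E[\partial_x^2 m|_{0}]\,\E\widehat Y_{ij}^2$, whose difference is at most $C n^{-2}v^{-3}\bigl(\E|\E(\widehat X_{ij}^2\mid\mathfrak F^{(i,j)})-\sigma_{ij}^2|+|\E\widehat Y_{ij}^2-\sigma_{ij}^2|\bigr)$, which summed over $\gamma$ is $Cv^{-3}$ times a null sequence; and the third-order remainders are at most $Cn^{-5/2}v^{-4}\bigl(\E|\widehat X_{ij}|^3+\E|\widehat Y_{ij}|^3\bigr)\le Cn^{-5/2}v^{-4}\tau_n\sqrt n\,\sigma_{ij}^2$, which summed over $\gamma$ is at most $Cv^{-4}\tau_n\cdot\tfrac1n\sum_i B_i^2\to0$. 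Summing the three bounds over $\gamma=1,\dots,K$ yields $\E m_{\widehat\X_n}(z)-\E m_{\widehat\Y_n}(z)\to0$, which finishes the argument.

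I expect the main difficulty to be two intertwined points. First, the second-order swap term does \emph{not} cancel, since $\E(X_{ij}^2\mid\mathfrak F^{(i,j)})$ need not equal $\sigma_{ij}^2$; it can be absorbed only because two resolvent derivatives together with the $\tfrac1n\Tr$ produce exactly the prefactor $n^{-2}$, so that summation over the $\sim n^2/2$ pairs reproduces precisely $\tfrac1{n^2}\sum_{i,j}\E|\E(X_{ij}^2\mid\mathfrak F^{(i,j)})-\sigma_{ij}^2|$, the quantity controlled by \eqref{eq: variance prop} — the margin here is tight and no slack is available. Second, one has to keep the conditioning exactly right: the common part $\W$ and the derivatives $\partial_x^k m|_0$ must be $\mathfrak F^{(i,j)}\vee\sigma(\widehat\Y_n)$-measurable, which is what forces the unrecentred truncation, makes the first two expansion terms collapse to the stated conditional expectations, and — because an unrecentred truncation leaves a first-order bias summing to order $v^{-2}L_n(\tau_n)/\tau_n$ — requires calibrating $\tau_n$ so that $L_n(\tau_n)/\tau_n\to0$ rather than merely $L_n(\tau_n)\to0$.
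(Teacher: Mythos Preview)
Your argument is correct and self-contained. It differs from the paper's proof in two respects, both at the level of implementation rather than strategy.

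First, the paper uses a \emph{continuous} interpolation $\Z(\varphi)=\X\cos\varphi+\Y\sin\varphi$ and writes $S^{\X}(z)-S^{\Y}(z)=-\int_0^{\pi/2}\partial_\varphi S(z,\varphi)\,d\varphi$, then Taylor-expands the function $u_{ij}=-[\RR^2]_{ji}$ in the single entry $Z_{ij}$ to second order with integral remainder. You instead run the classical \emph{discrete} Lindeberg swap over the $\sim n^2/2$ entry positions with a third-order expansion of $m$ in the scalar entry. The resolvent-derivative bounds and the powers of $n$ and $v$ that emerge are identical in the two schemes; the continuous version packages the sum over positions as an integral, while your version makes the entry-by-entry bookkeeping explicit. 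Neither gains or loses anything quantitatively.

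Second, the paper truncates and then \emph{conditionally recenters}, replacing $\hat X_{ij}$ by $\overline X_{ij}=\hat X_{ij}-\E(\hat X_{ij}\mid\mathfrak F^{(i,j)})$, so that the first-order term vanishes exactly and one only needs $L_n(\tau_n)\to0$. You keep the raw truncation $\widehat X_{ij}$ and absorb the residual first-order bias $\E(\widehat X_{ij}\mid\mathfrak F^{(i,j)})=-\E(X_{ij}\one(|X_{ij}|>\tau_n\sqrt n)\mid\mathfrak F^{(i,j)})$ directly, at the price of calibrating $\tau_n$ so that $L_n(\tau_n)/\tau_n\to0$ (which, as you note, is always achievable by a diagonal argument). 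The upside of your choice is that $\widehat X_{kl}$ is a function of $X_{kl}$ alone, so the measurability of the frozen background $\W$ with respect to $\mathfrak F^{(i,j)}\vee\sigma(\Y_n)$ is immediate; the paper's recentered entries $\overline X_{kl}$ carry a term $\E(\hat X_{kl}\mid\mathfrak F^{(k,l)})$ that is \emph{not} a priori $\mathfrak F^{(i,j)}$-measurable, so your route is in fact cleaner on precisely the conditioning point you flagged as the main difficulty.
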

In view of Theorem~\ref{th:universality} to prove Theorem~\ref{th:main} it remains to show convergence to semicircle  law in the Gaussian case.
\begin{theorem} \label{th: gaussian case}
Assume that the entries $Y_{ij}$ of the matrix $\Y_n$ are independent for all $1 \le i \le j \le n$ and have Gaussian distribution with $\E Y_{ij} = 0$, $\E Y_{ij}^2 = \sigma_{ij}^2$.
Assume that conditions~\eqref{eq: limit of Lind. ratio}--\eqref{eq: variance prop 3} are satisfied.
Then
$$
\sup_{x}|F^{\Y_n}(x) - G(x)| \rightarrow 0 \text{ as } n \rightarrow \infty.
$$
\end{theorem}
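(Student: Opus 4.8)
The plan is to pass to Stieltjes transforms and prove pointwise convergence on the upper half-plane. Fix $z = u + iv$ with $v > 0$, put $\mathbf R(z) := (n^{-1/2}\Y_n - z\I)^{-1}$, $m_n(z) := n^{-1}\Tr \mathbf R(z)$ and $s_n(z) := \E m_n(z)$, so that $s_n$ is the Stieltjes transform of $F^{\Y_n}$. Since $G$ is continuous, it suffices, by the continuity theorem for Stieltjes transforms together with P\'olya's theorem, to prove $s_n(z) \to s(z)$ for every such $z$, where $s(z) = \tfrac12\big(-z + \sqrt{z^2 - 4}\,\big)$ is the unique root of $w^2 + zw + 1 = 0$ with positive imaginary part. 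I would first record the variance bound $\E|m_n(z) - s_n(z)|^2 \le C(v)/n$, coming from the martingale-difference decomposition of $m_n$ along the filtration generated by the successive rows of $\Y_n$, with each increment $O(1/(nv))$ by the rank-one resolvent identity; this reduces the problem to the deterministic sequence $s_n$.

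Next, truncation. Using \eqref{eq: limit of Lind. ratio} together with the rank inequality $\sup_x|\mathcal F^{\mathbf A} - \mathcal F^{\mathbf B}| \le n^{-1}\Rank(\mathbf A - \mathbf B)$ and the Hoffman--Wielandt bound, one may replace $\Y_n$ by $\widehat\Y_n$ with entries $Y_{ij}\one(|Y_{ij}| \le \tau\sqrt n)$ at the price of $L(F^{\Y_n}, F^{\widehat\Y_n}) \to 0$ as $n\to\infty$, for each fixed $\tau > 0$. Keeping the notation $\sigma_{ij}^2$ for the variances of the entries of $\widehat\Y_n$, we now have $\sigma_{ij}^2 \le \tau^2 n$, while \eqref{eq: variance prop 2} and \eqref{eq: variance prop 3} persist (with $1$ and $C$ replaced by $1 + o(1)$ and $C + o(1)$). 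From now on a factor $\sigma_{ij}^2/n \le \tau^2$ sits inside every error term, which is how the possibly large variances of the original entries are controlled, the final step always being $n\to\infty$ followed by $\tau\to0$.

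The core is the self-consistent equation, which I would obtain either from Gaussian integration by parts applied to $\Y_n$ or, after the above truncation, from the Schur complement expansion; I describe the latter. With $W_{ij} := n^{-1/2}Y_{ij}$ (entries of $\widehat\Y_n$), $R_{ii} = \big(-z + W_{ii} - \mathbf w_i^{*}\mathbf R^{(i)}\mathbf w_i\big)^{-1}$, where $\mathbf w_i = (W_{ik})_{k\ne i}$ and $\mathbf R^{(i)}$ is the resolvent of the principal minor with row and column $i$ removed. Since $\E\big[\mathbf w_i^{*}\mathbf R^{(i)}\mathbf w_i \mid \mathbf R^{(i)}\big] = n^{-1}\sum_{k\ne i}\sigma_{ik}^2(\mathbf R^{(i)})_{kk}$, which differs from $\widehat m_i := n^{-1}\sum_{k}\sigma_{ik}^2 R_{kk}$ by $O(\tau^2/v)$ via the rank-one resolvent identity, while $W_{ii}$ and the fluctuation of the quadratic form have $L^2$-norm $O(\tau)$ uniformly in $i$ (here \eqref{eq: variance prop 3} is used), one obtains $R_{ii}\big(z + \widehat m_i\big) = -1 + \mathrm{err}_i$ with $n^{-1}\sum_i \E|\mathrm{err}_i| = o(1)$. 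Averaging over $i$ and taking expectations,
$$
1 + z\, s_n(z) + \E\Big[\frac{1}{n^{2}}\sum_{i,j=1}^{n}\sigma_{ij}^2\, R_{ii}R_{jj}\Big] = \varepsilon_n(z), \qquad \varepsilon_n(z) = o(1).
$$
Identifying the bilinear term with $s_n(z)^2 + o(1)$ proceeds in three steps: \emph{(i) decoupling}, replacing $R_{ii}R_{jj}$ by $\E R_{ii}\cdot\E R_{jj}$ at the cost $n^{-2}\sum_{ij}\sigma_{ij}^2|\mathrm{Cov}(R_{ii},R_{jj})|$, estimated through a martingale-difference bound for the individual entries $R_{ii}-\E R_{ii}$ together with $\sigma_{ij}^2 \le \tau^2 n$; \emph{(ii) averaged homogeneity}, $n^{-1}\sum_i|\E R_{ii} - s_n(z)| \to 0$, obtained from $R_{ii}\approx -(z+\widehat m_i)^{-1}$ (a denominator of imaginary part $\ge v$) and the fact, via \eqref{eq: variance prop 2} and \eqref{eq: variance prop 3}, that $\widehat m_i$ is close in the $i$-average to $B_i^2 s_n(z) \approx s_n(z)$; \emph{(iii) reweighting}, combining (i), (ii) and \eqref{eq: variance prop 2} to get $n^{-2}\sum_{ij}\sigma_{ij}^2\E[R_{ii}R_{jj}] = n^{-2}\sum_{ij}\E R_{ii}\,\E R_{jj} + o(1) = s_n(z)^2 + o(1)$.

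Combining the above, $s_n(z)^2 + z\, s_n(z) + 1 = o(1)$, with $\imag s_n(z) > 0$ and $|s_n(z)| \le 1/v$; since the two roots of $w^2 + zw + 1$ lie on opposite sides of the real axis and $s_n$ lies in the upper half-plane, a routine stability argument forces $s_n(z) \to s(z)$. Undoing the truncation and applying the continuity theorem for Stieltjes transforms and P\'olya's theorem then yields $\sup_x|F^{\Y_n}(x) - G(x)| \to 0$. The main obstacle is step (ii): the averaged homogeneity of the diagonal resolvent entries is a weak, averaged form of a local law — it requires a bootstrap/stability argument for the vector equation $R_{ii} = -(z + \widehat m_i)^{-1} + \mathrm{err}_i$, first for large $v$ and then for all $v > 0$ — and it is exactly there that hypotheses \eqref{eq: variance prop 2}--\eqref{eq: variance prop 3} are indispensable.
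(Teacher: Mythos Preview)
Your route is genuinely different from the paper's. The paper proves Theorem~\ref{th: gaussian case} by the \emph{moment method}: it expands $\E n^{-1}\Tr(n^{-1/2}\Y)^k$ as a sum over closed walks, classifies the associated $\Gamma$-graphs into the three usual categories, uses the Gaussianity only through $\E Y_{ij}^{2s-1}=0$ and $\E Y_{ij}^{2s}\le_s\sigma_{ij}^{2s}$, and then shows that the tree graphs (category~1) contribute the Catalan numbers while the remaining graphs are $o_k(1)$. Conditions~\eqref{eq: variance prop 2}--\eqref{eq: variance prop 3} enter very concretely: one repeatedly prunes a leaf $i_q$ from the tree and uses $n^{-1}\sum_{i_q}\sigma_{i_pi_q}^2=B_{i_p}^2=1+o(1)$ on average, with the uniform bound $B_i\le C$ controlling the intermediate steps; the Lindeberg condition~\eqref{eq: limit of Lind. ratio} handles the category~3 graphs via $\sigma_{ij}^4\le\tau_n^2 n\,\sigma_{ij}^2+\sigma_{ij}^2\E Y_{ij}^2\one(|Y_{ij}|\ge\tau_n\sqrt n)$. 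No stability or bootstrap argument is needed.

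Your Stieltjes/Schur-complement plan is sound in outline, but the place you flag as the main obstacle is more serious than your sketch suggests, and it already infects step~(i). After your truncation you only know $\sigma_{ij}^2\le\tau^2 n$, so an individual variance can be of order $n$. The quantity $\widehat m_i=n^{-1}\sum_j\sigma_{ij}^2R_{jj}=\Tr(D_iR)$ with $\|D_i\|\le\tau^2$ then has, by the standard martingale bound for traces under rank-one perturbations, $\mathrm{Var}(\widehat m_i)=O(n\tau^4/v^2)$, which blows up as $n\to\infty$ for fixed $\tau$; the same issue obstructs the replacement $\E[Q_i\mid R^{(i)}]\to\E Q_i$ and hence the passage from $R_{ii}$ to $-(z+\E\widehat m_i)^{-1}$. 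Likewise, your decoupling bound $n^{-2}\sum_{ij}\sigma_{ij}^2|\mathrm{Cov}(R_{ii},R_{jj})|$ needs control on $\mathrm{Var}(R_{ii})$ that is not supplied by a martingale bound alone under these hypotheses. These gaps are repairable (for instance by splitting off the $o(n)$ many pairs with $\sigma_{ij}^2\ge\varepsilon n$ via a rank argument, exploiting \eqref{eq: limit of Lind. ratio} in the Gaussian case to see that their total contribution to $n^{-2}\sum\sigma_{ij}^2$ is negligible, and then running your argument with a genuine bound $\max_{ij}\sigma_{ij}^2\le\varepsilon n$), but this extra layer is exactly what the paper's moment computation avoids. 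In short: your plan trades combinatorics for analysis; the paper's proof is more elementary here because the Gaussian moments and the tree structure let conditions~\eqref{eq: variance prop 2}--\eqref{eq: variance prop 3} be used \emph{row by row} without any concentration input.
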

For related ensembles of random covariance matrices it is well known that spectral distribution function of eigenvalues converges to the Marchenko--Pastur law.
In this case G{\"o}tze and Tikhomirov in~\cite{GotTikh2004} received similar results to~\cite{GotTikh2006}. Recently Adamczak,~\cite{Adamczak2011}, proved the Marchenko--Pastur law for matrices with martingale structure.
He assumed that the matrix elements have moments of all orders and imposed conditions similar to~\eqref{eq: variance prop 2}. Another class of random matrices with dependent entries
was considered in~\cite{ORourke2012} by O'Rourke. In a forthcoming paper we prove analogs of these theorems for random covariance matrices.

The paper organized as follows. In Section~2 we give a proof of Theorem~\ref{th:universality} using the method of Stieltjes transforms. In Section~3 we prove Theorem~\ref{th: gaussian case} by the classical moment method.

Throughout this paper we assume that all random variables are defined on common probability
space $(\Omega, \mathcal{F}, \Pb)$. Let $\Tr(\A)$ denote the trace of a matrix $\A$. For a vector $x = (x_1, ... , x_n)$ let $||x||_2 := (\sum_{i=1}^n x_i^2 )^{1/2}$.
We denote the operator norm of the matrix $\A$ by $||\A||:=\sup\limits_{||x||_2 = 1} ||\A x||_2$. We will write $a \le_m b$ if there is an absolute constant $C$ depending on $m$ only such that $a \le C b$.

From now on we shall omit the index $n$ in the notation for random matrices.

\section{Proof of Theorem~\ref{th:universality}}

We denote the Stieltjes transforms of $F^{\X}$ and $F^{\Y}$ by $S^{\X}(z)$ and $S^{\Y}(z)$ respectively.
Due to the relations between distribution functions and Stieltjes transforms, the statement of Theorem~\ref{th:universality}
will follow from
\begin{equation} \label{eq: st convergenes}
|S^{\X}(z) - S^{\Y}(z)| \rightarrow 0 \text{ as } n \rightarrow \infty.
\end{equation}
Set
\begin{equation} \label{eq:resolvent}
\RR^{\X}(z):= \left(\frac{1}{\sqrt{n}}\X - z \I \right)^{-1} \text{ and } \RR^{\Y}(z):= \left(\frac{1}{\sqrt{n}}\Y - z \I\right)^{-1}.
\end{equation}
By definition
\begin{equation*}
S^{\X}(z ) = \frac{1}{n} \Tr \E\RR^{\X}(z) \text{ and } S^{\Y}(z ) = \frac{1}{n} \Tr \E\RR^{\Y}(z).
\end{equation*}

We divide the proof of~\eqref{eq: st convergenes} into the two subsections~\ref{sec:transcation} and~\ref{sec:universality}.

Note that we can substitute $\tau$ in~\eqref{eq: limit of Lind. ratio}  by a decreasing sequence $\tau_n$ tending to zero such that
\begin{equation}\label{eq: LF2}
L_n(\tau_n) \rightarrow 0 \text{ as } n \rightarrow \infty.
\end{equation}
and $\lim_{n \rightarrow \infty} \tau_n \sqrt n = \infty$.

\subsection{Truncation of random variables}\label{sec:transcation}
In this section we truncate the elements of the matrices $\X$ and $\Y$.
Let us omit the indices $\X$ and $\Y$ in the notations of the resolvent and the Stieltjes transforms.

Consider some symmetric $n\times n$ matrix $\D$. Put $\tilde \X = \X + \D$. Let
$$
\tilde \RR = \left (\frac{1}{\sqrt n} \tilde \X - z \I \right )^{-1}.
$$
\begin{lemma} \label{l:tcn}
$$
|\Tr \RR - \Tr \tilde \RR| \le \frac{1}{v^2} (\Tr \D^2)^{\frac{1}{2}}.
$$
\end{lemma}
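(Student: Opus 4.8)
The plan is to combine the second resolvent identity with Hilbert--Schmidt norm estimates; the whole argument is elementary, and the only thing requiring attention is the bookkeeping of the factor $n$. Throughout write $v:=\imag z>0$, set $H:=\tfrac1{\sqrt n}\X$ and $\tilde H:=\tfrac1{\sqrt n}\tilde\X=H+\tfrac1{\sqrt n}\D$, so that $\RR=(H-z\I)^{-1}$, $\tilde\RR=(\tilde H-z\I)^{-1}$, and denote by $\|A\|_2:=(\Tr(A A^{*}))^{1/2}$ the Hilbert--Schmidt norm (so $\|\D\|_2=(\Tr\D^2)^{1/2}$ since $\D$ is real symmetric).

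First I would write down the resolvent identity. Since $\tilde\RR^{-1}-\RR^{-1}=\tfrac1{\sqrt n}\D$, we get
$$
\tilde\RR-\RR=-\tilde\RR\bigl(\tilde\RR^{-1}-\RR^{-1}\bigr)\RR=-\frac1{\sqrt n}\,\tilde\RR\,\D\,\RR,
$$
and taking traces yields $|\Tr\RR-\Tr\tilde\RR|=\tfrac1{\sqrt n}\,|\Tr(\tilde\RR\D\RR)|$. Next, by the Cauchy--Schwarz inequality for the trace inner product, $|\Tr(AB)|\le\|A\|_2\|B\|_2$, applied with $A=\tilde\RR$ and $B=\D\RR$, followed by submultiplicativity $\|\D\RR\|_2\le\|\D\|_2\,\|\RR\|$, we obtain
$$
|\Tr(\tilde\RR\D\RR)|\le\|\tilde\RR\|_2\,\|\D\|_2\,\|\RR\|.
$$

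Finally I would use that $H$ and $\tilde H$ are real symmetric, hence have real eigenvalues $\lambda_i$ and $\mu_i$ respectively, so that $\|\RR\|=\max_i|\lambda_i-z|^{-1}\le v^{-1}$ and $\|\tilde\RR\|_2^2=\sum_{i=1}^n|\mu_i-z|^{-2}\le n\,v^{-2}$, i.e. $\|\tilde\RR\|_2\le\sqrt n\,v^{-1}$. Combining the three displays,
$$
|\Tr\RR-\Tr\tilde\RR|\le\frac1{\sqrt n}\cdot\frac{\sqrt n}{v}\cdot\|\D\|_2\cdot\frac1v=\frac1{v^2}\,(\Tr\D^2)^{1/2},
$$
which is the assertion. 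The one point to get right is precisely this last cancellation: the $n^{-1/2}$ coming from the resolvent identity must exactly absorb the $\sqrt n$ growth of the Hilbert--Schmidt norm of a resolvent of an $n\times n$ matrix, leaving the clean bound in terms of $(\Tr\D^2)^{1/2}$; everything else is standard linear algebra.
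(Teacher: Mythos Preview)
Your proof is correct and follows essentially the same approach as the paper: the resolvent identity $\RR-\tilde\RR=\tfrac1{\sqrt n}\RR\D\tilde\RR$ combined with the Hilbert--Schmidt/operator norm bounds $\|\RR\|,\|\tilde\RR\|\le v^{-1}$ and $\|\tilde\RR\|_2\le\sqrt n\,v^{-1}$. The paper compresses the Cauchy--Schwarz step into ``it is easy to show'', but your explicit bookkeeping of the $\sqrt n$ cancellation is exactly what is needed.
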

\begin{proof}
By the resolvent equation
\begin{equation}\label{eq: resolvent equation}
\RR = \tilde  \RR -\frac{1}{\sqrt n} \RR \D \tilde \RR.
\end{equation}
For resolvent matrices we have, for $z = u + i v, v>0$,
\begin{equation} \label{eq: resolvent inequaility}
\max\{||\RR||, ||\tilde  \RR||\} \le \frac{1}{v}.
\end{equation}
Using~\eqref{eq: resolvent equation} and~\eqref{eq: resolvent inequaility} it is easy to show that
\begin{equation*}\label{eq: pertubation}
|\Tr \RR - \Tr \tilde \RR| = \frac{1}{ \sqrt n} |\Tr \RR \D \tilde \RR| \le \frac{1}{v^2} (\Tr \D^2)^{\frac{1}{2}}.
\end{equation*}
\end{proof}
We split the matrix entries as $X = \hat X + \check X$, where $\hat X := X \one(|X| < \tau_n \sqrt n)$ and
$\check X := X \one(|X| \geq \tau_n \sqrt n)$. Define the matrix $\hat \X = \{\hat X_{ij}\}_{i,j=1}^n$. Let
\begin{equation*}
\hat \RR(z):= \left ( \frac{1}{\sqrt n} \hat \X - z \I \right )^{-1} \text{ and } \hat S(z) = \frac{1}{n} \E \Tr \hat \RR(z).
\end{equation*}
By Lemma~\ref{l:tcn}
$$
|S(z) - \hat S(z)| \le \frac{1}{v^2} \left( \frac{1}{n^2} \sum_{i,j=1}^n \E X_{ij}^2 \one(|X_{ij}| \geq \tau_n \sqrt n) \right )^{1/2} = v^{-2}L_n^{\frac{1}{2}}(\tau_n).
$$
From~\eqref{eq: LF2} we conclude that
$$
|S(z) - \hat S(z)| \rightarrow 0 \text{ as } n \rightarrow \infty.
$$
Introduce the centralized random variables $\overline X_{ij} = \hat X_{ij} - \E(\hat X_{ij} | \mathfrak{F}^{(i,j)})$ and the matrix $\overline \X = \{ \overline X_{ij}\}_{i,j = 1 }^n$. Let
\begin{equation*}
\overline \RR(z):= \left ( \frac{1}{\sqrt n} \overline \X - z \I \right )^{-1} \text{ and } \overline S(z) = \frac{1}{n} \E \Tr \overline \RR(z).
\end{equation*}
Again by Lemma~\ref{l:tcn}
$$
|\hat S(z) - \overline S(z)| \le \frac{1}{v^2} \left( \frac{1}{n^2} \sum_{i,j=1}^n \E X_{ij}^2 \one(|X_{ij}| \geq \tau_n \sqrt n) \right )^{1/2} = v^{-2}L_n^{\frac{1}{2}}(\tau_n).
$$
In view of~\eqref{eq: LF2} the right hand side tends to zero as $n \rightarrow \infty$.

Now we show that~\eqref{eq: variance prop} will hold if we replace $\X$ by $\overline \X$. For all $1 \le i \le j \le n$
\begin{align*}
&\E(\overline X_{ij}^2|\mathfrak{F}^{(i,j)}) - \sigma_{ij}^2  \\
&= \E(\overline X_{ij}^2|\mathfrak{F}^{(i,j)}) - \E(\hat X_{ij}^2|\mathfrak{F}^{(i,j)}) - \E(\check X_{ij}^2|\mathfrak{F}^{(i,j)}) + \E(X_{ij}^2|\mathfrak{F}^{(i,j)})- \sigma_{ij}^2 \nonumber.
\end{align*}
By the triangle inequality and~\eqref{eq: variance prop},~\eqref{eq: LF2}
\begin{align} \label{eq: variance prop for  trunctated}
&\frac{1}{n^2} \sum_{i,j = 1}^n \E|\E(\overline X_{ij}^2|\mathfrak{F}^{(i,j)}) - \sigma_{ij}^2|  \\\
&\le \frac{1}{n^2} \sum_{i,j = 1}^n \E|\E(X_{ij}^2|\mathfrak{F}^{(i,j)}) - \sigma_{ij}^2| + 2 L_n(\tau_n) \rightarrow 0 \text{ as } n \rightarrow \infty. \nonumber
\end{align}

It is also not very difficult to check that the condition~\eqref{eq: variance prop 2} holds true for the matrix $\X$ replaced by $\overline \X$.

Similarly, one may truncate the elements of the matrix $\Y$ and consider the matrix $\overline \Y$ with the entries $Y_{ij} \one(|Y_{ij}| \le \tau_n \sqrt n)$. Then one may check that
\begin{equation} \label{eq: variance prop for  trunctated gauss}
\frac{1}{n^2} \sum_{i, j=1}^n|\E \overline Y_{ij}^2 - \sigma_{ij}^2| \rightarrow 0 \text{ as } n \rightarrow \infty.
\end{equation}

In what follows assume from now on that $|X_{ij}| \le \tau_n \sqrt n$ and $|Y_{ij}| \le \tau_n \sqrt n$. We shall write $\X, \Y$ instead of $\overline \X$ and $\overline \Y$ respectively.

\subsection{Universality of the spectrum of eigenvalues}\label{sec:universality}

To prove~\eqref{eq: st convergenes} we will use a method introduced in~\cite{Bentkus2003}. Define the matrix $\Z: = \Z(\varphi) :=  \X \cos \varphi +  \Y \sin \varphi$.
It is easy to see that $\Z(0) = \X$ and $\Z(\pi/2) = \Y$.
Set $\W := \W(\varphi): = n^{-1/2} \Z$ and
\begin{equation}
\RR(z, \varphi):= (\W - z \I)^{-1}.
\end{equation}
Introduce the Stieltjes transform
$$
S(z, \varphi) := \frac{1}{n} \sum_{i=1}^{n} \E [\RR(z, \varphi)]_{i i}.
$$
Note that $S(z, 0)$ and $S(z, \pi/2)$ are the Stieltjes transforms $S^{\X}(z)$ and
$S^{\Y}(z)$ respectively.

Obviously we have
\begin{equation} \label{eq:NL}
S(z,\frac{\pi}{2}) - S(z,0) = \int_0^\frac{\pi}{2} \frac{\partial S(z, \varphi)}{\partial \varphi}d\varphi.
\end{equation}
To simplify the arguments we will omit arguments in the notations of matrices and Stieltjes transforms. We have
\begin{align*}
\frac{\partial \W}{\partial \varphi} =
\frac{1}{\sqrt n} \sum_{i = 1}^n \sum_{j = 1}^n \frac{\partial Z_{ij}}{\partial \varphi} \ee_i \ee_{j}^T,
\end{align*}
where we denote by $\ee_i$ the column vector with $1$ in position $i$ and zeros in the other positions.
We may rewrite the integrand in~\eqref{eq:NL} in the following way
\begin{align} \label{eq: derivative}
&\frac{\partial S}{\partial \varphi} = -\frac{1}{n} \E \Tr \RR \frac{\partial \W}{\partial \varphi} \RR \\
&= -\frac{1}{n^{3/2}} \sum_{i = 1}^n \sum_{j=1}^n \E\Tr \RR \frac{\partial Z_{ij}}{\partial \varphi} \ee_i \ee_{j}^T \RR  \nonumber \\
&= \frac{1}{n^{3/2}} \sum_{i = 1}^n \sum_{j=1}^n \E \frac{\partial Z_{ij}}{\partial \varphi} u_{i j}, \nonumber
\end{align}
where $u_{ij} = -[\RR^2]_{ji}$.

For all $1 \le i \le j \le n$ introduce the random variables
\begin{equation*}
\xi_{ij} :=  Z_{ij}, \quad \hat \xi_{ij} := \frac{\partial Z_{ij}}{\partial \varphi} = - \sin \varphi X_{i j} + \cos \varphi Y_{i j},
\end{equation*}
and the sets of random variables
$$
\xi^{(ij)} := \{\xi_{kl}: 1 \le k \le l \le n,  (k,l) \neq (i,j) \}.
$$
Using Taylor's formula one may write
\begin{align*} \label{eq: TF}
u_{i j} (\xi_{i j}, \xi^{(ij)}) = u_{i j}(0, \xi^{(ij)}) + \xi_{i j} \frac{\partial u_{i j}}{\partial \xi_{i j}}(0,\xi^{(ij)}) +
\E_{\theta} \theta (1-\theta) \xi_{i j}^2 \frac{\partial^2 u_{i j}}{\partial \xi_{i j}^2}(\theta \xi_{i j}, \xi^{(ij)}),
\end{align*}
where $\theta$ has a uniform distribution on $[0,1]$ and is independent of $(\xi_{i j},\xi^{(ij)})$. Multiplying both sides of the last equation by $\hat \xi_{i j}$
and taking mathematical expectation on both sides we have
\begin{align}
&\E \hat \xi_{i j} u_{i j} (\xi_{i j}, \xi^{(ij)}) =  \E \hat \xi_{i j} u_{i j}(0, \xi^{(ij)}) + \E\hat \xi_{i j}\xi_{i j}\frac{\partial u_{i j}}{\partial \xi_{i j}}(0,\xi^{(ij)}) \\
&+ \E \theta (1-\theta)\hat \xi_{i j} \xi_{i j}^2  \frac{\partial^2 u_{i j}}{\partial \xi_{i j}^2}(\theta \xi_{i j}, \xi^{(ij)}). \nonumber
\end{align}
By independence of $Y_{ij}$ and $\xi^{(ij)}$ we get
\begin{equation} \label{eq: first term_1}
\E Y_{ij} u_{i j}(0, \xi^{(ij)}) = \E Y_{ij} \E u_{i j}(0, \xi^{(ij)}) = 0.
\end{equation}
By the properties of conditional expectation and condition~\eqref{eq:mrtingale prop}
\begin{equation} \label{eq: first term_2}
\E X_{ij} u_{i j}(0, \xi^{(ij)}) = \E u_{i j}(0, \xi^{(ij)}) \E ( X_{i j}  | \mathfrak{F}^{(i,j)}) = 0.
\end{equation}
By~\eqref{eq: TF},~\eqref{eq: first term_1} and~\eqref{eq: first term_2} we can rewrite~\eqref{eq: derivative} in the following way
\begin{align*}
&\frac{\partial S}{\partial \varphi} = \frac{1}{n^2} \sum_{i,j = 1}^n \E \hat \xi_{i j}\xi_{i j}\frac{\partial u_{i j}}{\partial \xi_{i j}}(0,\xi^{(ij)}) +
\frac{1}{n^2} \sum_{i,j = 1}^n  \E \theta (1-\theta)\hat \xi_{i j} \xi_{i j}^2  \frac{\partial^2 u_{i j}}{\partial \xi_{i j}^2}(\theta \xi_{i j}, \xi^{(ij)}) \\
& = \mathbb A_1 + \mathbb A_2.
\end{align*}
It is easy to see that
$$
\hat \xi_{ij} \xi_{i j} = - \frac{1}{2} \sin 2 \varphi X_{ij}^2 + \cos^2 \varphi X_{i j} Y_{i j} - \sin^2 \varphi X_{i j} Y_{i j} + \frac{1}{2} \sin 2 \varphi Y_{ij}^2.
$$
The random variables $Y_{ij}$ are independent of $X_{ij}$ and $\xi^{(ij)}$. Using this fact we conclude that
\begin{align} \label{eq: second term 1}
&\E X_{i j} Y_{i j} \frac{\partial u_{i j}}{\partial \xi_{i j}}(0,\xi^{(ij)}) = \E Y_{ij} \E X_{i j} \frac{\partial u_{i j}}{\partial \xi_{i j}}(0,\xi^{(ij)})  = 0, \\
&\E Y_{ij}^2 \frac{\partial u_{ij}}{\partial \xi_{ij}}(0, \xi^{(ij)}) = \sigma_{ij}^2 \E \frac{\partial u_{ij}}{\partial \xi_{ij}}(0, \xi^{(ij)}).
\end{align}
By the properties of conditional mathematical expectation we get
\begin{equation}\label{eq: second term 2}
\E X_{ij}^2\frac{\partial u_{ij}}{\partial \xi_{ij}}(0, \xi^{(ij)})
= \E \frac{\partial u_{ij}}{\partial \xi_{ij}}(0, \xi^{(ij)})\E (X_{ij}^2|\mathfrak{F}^{(i,j)}).
\end{equation}
A direct calculation shows that the derivative of $u_{ij} = -[\RR^2]_{ji}$ is equal to
\begin{align*}
&\frac{\partial u_{ij}}{\partial \xi_{ij}} =  \left [\RR^2 \frac{\partial \Z }{\partial \xi_{ij}} \RR \right ]_{ji}  +  \left [ \RR \frac{\partial \Z }{\partial \xi_{ij}} \RR^2 \right ]_{ji}  \\
&=\frac{1}{\sqrt n} [ \RR^2 \ee_i \ee_{j}^T \RR]_{ji} + \frac{1}{\sqrt n} [\RR^2 \ee_j \ee_{i}^T \RR]_{ji}
+\frac{1}{\sqrt n}[\RR \ee_i \ee_{j}^T \RR^2]_{ji} + \frac{1}{\sqrt n} [ \RR \ee_j \ee_{i}^T \RR^2]_{ji} \\
&= \frac{1}{\sqrt n} [\RR^2]_{ji}[\RR]_{ji} + \frac{1}{\sqrt n} [\RR^2]_{jj}[\RR]_{ii}
+ \frac{1}{\sqrt n} [\RR]_{ji}[\RR^2]_{ji} + \frac{1}{\sqrt n} [\RR]_{jj}[\RR^2]_{ii}.
\end{align*}
Using the obvious bound for the spectral norm of the matrix resolvent $||\RR|| \le v^{-1}$ we get
\begin{equation} \label{eq: bound of first derivative}
\left | \frac{\partial u_{ij}}{\partial \xi_{ij}} \right | \le \frac{C}{\sqrt n v^3}.
\end{equation}
From~\eqref{eq: second term 1}--\eqref{eq: bound of first derivative} and~\eqref{eq: variance prop for  trunctated}--\eqref{eq: variance prop for  trunctated gauss} we deduce
\begin{equation}\label{eq: A_1 est}
|\mathbb A_1| \le \frac{C}{n^2 v^3} \sum_{i,j = 1}^n \E|\E(X_{ij}^2|\mathfrak{F}^{(i,j)})-\sigma_{ij}^2| \rightarrow 0 \text{ as } n \rightarrow \infty.
\end{equation}
It remains to estimate $\mathbb A_2$. We calculate the second derivative of $u_{ij}$
\begin{align*}
&\frac{\partial^2 u_{ij}}{\partial \xi_{ij}^2} = - 2\left [\RR^2 \frac{\partial \W }{\partial \xi_{ij}} \RR \frac{\partial \W}{\partial \xi_{ij}} \RR \right ]_{ji}
- 2\left [ \RR \frac{\partial \W }{\partial \xi_{ij}} \RR^2 \frac{\partial \W}{\partial \xi_{ij}} \RR \right ]_{ji} \\
&- 2\left [ \RR \frac{\partial \W }{\partial \xi_{ij}} \RR \frac{\partial \W}{\partial \xi_{ij}} \RR^2 \right ]_{ji}
 = \mathbb T_{1} + \mathbb T_{2} + \mathbb T_{3}.
\end{align*}
Let's expand the term $\mathbb T_{1}$
\begin{align} \label{eq: A111}
\mathbb T_{1} = - 2\left [ \RR^2 \frac{\partial \W }{\partial \xi_{ij}} \RR \frac{\partial \W}{\partial \xi_{ij}} \RR \right ]_{ji}=
\mathbb T_{11} + \mathbb T_{12} + \mathbb T_{13} + \mathbb T_{14},
\end{align}
where we denote
\begin{align*}
&\mathbb T_{11} = -\frac{2}{n} [\RR^2]_{ji} [\RR]_{ji} [\RR]_{ji}, \quad \mathbb T_{12} = -\frac{2}{n} [\RR^2]_{ji} [\RR]_{jj} [\RR]_{ii}, \\
&\mathbb T_{13} = -\frac{2}{n} [\RR^2]_{jj} [\RR]_{ii} [\RR]_{ji}, \quad \mathbb T_{14} = -\frac{2}{n} [\RR^2]_{jj} [\RR]_{ij} [\RR]_{ii}.
\end{align*}
Using again the bound $||\RR|| \le v^{-1}$ we can show that
$$
\max (|\mathbb T_{11}|,|\mathbb T_{12}|,|\mathbb T_{13}|,|\mathbb T_{14}|) \le \frac{C}{n v^4}.
$$
From the expansion~\eqref{eq: A111} and the bounds of $\mathbb T_{1i}, i = 1,2,3,4$ we conclude that
$$
|\mathbb T_{1}| \le \frac{C}{n v^4}.
$$
Repeating the above arguments one can show that
$$
\max (|\mathbb T_{2}|,|\mathbb T_{3}|) \le \frac{C}{n v^4}.
$$
Finally we have
$$
\left|\frac{\partial^2 u_{ij}}{\partial \xi_{ij}^2}(\theta \xi_{i j},\xi^{(i j)}) \right| \le \frac{C}{n v^4}.
$$
Using the assumption $|\xi_{ij}| \le \tau_n \sqrt n$ and the condition~\eqref{eq: variance prop 2} we deduce the bound
\begin{equation}\label{eq: A_2 est}
|\mathbb A_2| \le \frac{C \tau_n}{v^4}.
\end{equation}
We may turn $\tau_n$ to zero and conclude the statement of Theorem~\ref{th:universality} from~\eqref{eq:NL}, \eqref{eq: derivative}, \eqref{eq: A_1 est} and~\eqref{eq: A_2 est}.

\section{Proof of Theorem~\ref{th: gaussian case}}

We prove the theorem using the moment method. It is easy to see that the moments of $F^{\Y}(x)$ can be rewritten as normalized traces of powers of $\Y$:
$$
\int_\R x^k dF^{\Y}(x) = \E \frac{1}{n} \Tr \left ( \frac{1}{\sqrt n} \Y \right)^k, ~  k \geq 1.
$$
It is sufficient to prove that
$$
\E \frac{1}{n} \Tr \left ( \frac{1}{\sqrt n} \Y \right)^k = \int_\R x^k dG(x) + o_k(1).
$$
for $k \geq 1$, where $o_k(1)$ tends to zero as $n \rightarrow \infty$ for any fixed $k$.

It is well known that the moments of semicircle law are given by the Catalan numbers
$$
\beta_k = \int_\R x^k dG(x) = \begin{cases}
  \frac{1}{m+1} \binom{2m}m, & k = 2m \\
  0, & k = 2m + 1.
\end{cases}
$$

Furthermore we shall use the notations and the definitions from~\cite{BaiSilv2010}.
A graph is a triple $(E, V, F)$, where $E$ is the set of edges, $V$ is the set of vertices, and $F$ is a function, $F: E \rightarrow V \times V$.
Let $\vect i = (i_1, ... , i_k)$ be a vector taking values in $\{1, ..., n\}^k$. For a vector $\vect i$ we define a $\Gamma$-graph as follows. Draw a horizontal line and plot the numbers $i_1, ... , i_k$ on it. Consider the distinct numbers as vertices, and draw $k$ edges $e_j$ from $i_j$ to $i_{j+1}, j = 1,...,k$, using $i_{k+1} = i_1$ by convention. Denote the number of distinct $i_j$'s by $t$. Such a graph is called a $\Gamma(k,t)$-graph.


Two $\Gamma(k,t)$-graphs are said to be isomorphic if they can be converted each other by a permutation of $(1, ..., n)$. By this definition, all $\Gamma$-graphs are classified into isomorphism classes. We shall call the $\Gamma(k,t)$-graph canonical if it has the following properties:\\
1) Its vertex set is $\{1, .... , t\}$;\\
2) Its edge set is $\{e_1, ... , e_k \}$;\\
3) There is a function g from $\{1, ..., k\}$ onto $\{1, ... , t\}$ satisfying $g(1) = 1$ and $g(i) \le \max\{g(1), ... , g(i-1)\} + 1$ for $1 < i \le k$; \\
4) $F(e_i) = (g(i), g(i+1))$, for $i=1,...,k$, with the convention $g(k+1) = g(1) = 1$.

It is easy to see that each isomorphism class contains one and only one canonical $\Gamma$-graph that is associated with a function $g$, and a general graph in this class can be defined by $F(e_j) = (i_{g(j)}, i_{g(j+1)})$. It is easy to see that each isomorphism class contains $n(n-1)...(n-t+1)$ $\Gamma(k,t)$-graphs.

We shall classify all canonical graphs into three categories. Category~$1$ consists of all canonical $\Gamma(k,t)$-graphs with the property that each
edge is coincident with exactly one other edge of opposite direction and the graph of noncoincident edges forms a tree. It is easy to see if $k$ is odd then there are no graphs in category~1. If $k$ is even, i.e. $k = 2m$, say, we denote a
$\Gamma(k,t)$-graph by $\Gamma_1(2m)$. Category~$2$ consists of all canonical graphs that have at least one edge with odd multiplicity. We shall denote the graph from this category by $\Gamma_2(k,t)$. Finally, category~$3$
consists of all other canonical graphs, which we denote by $\Gamma_3(k,t)$.

It is known, see~\cite[Lemma~2.4]{BaiSilv2010}, that the number of $\Gamma_1(2m)$-graphs is equal to $\frac{1}{m+1} \binom{2m}m$.

We expand the traces of powers of $\Y$ in a sum
\begin{equation} \label{eq: moments of X}
\Tr \left ( \frac{1}{\sqrt n} \Y \right)^k = \frac{1}{n^{k/2}}\sum_{i_1, i_2, ... , i_k} Y_{i_1 i_2} Y_{i_2 i_3} ... Y_{i_{k} i_1},
\end{equation}
where the summation is taken over all sequences $\vect i = (i_1, ... , i_k) \in \{1, ... , n \}^k$.

For each vector $\vect i$ we construct a graph $G(\vect i)$ as above. We denote by  $Y(\vect i) = Y(G(\vect i))$.

Then we may split the moments of $F^{\Y}(x)$ into three terms
$$
\E \frac{1}{n} \Tr \left ( \frac{1}{\sqrt n} \Y \right)^k = \frac{1}{n^{k/2+1}}\sum_{\vect i} \E Y_{i_1 i_2} Y_{i_2 i_3} ... Y_{i_{k} i_1} = S_1 + S_2 + S_3,
$$
where
$$
S_j = \frac{1}{n^{k/2+1}} \sum_{\Gamma(k,t) \in C_j} \sum_{G(\vect i) \in \Gamma(k,t)} \E[Y(G(\vect i))],
$$
and the summation $ \sum_{\Gamma(k,t) \in C_j}$ is taken over all canonical $\Gamma(k,t)$-graphs in category $C_j$ and the summation $\sum_{G(\vect i) \in \Gamma(k,t)}$ is taken over all isomorphic graphs for a given canonical graph.

From the independence of $Y_{ij}$ and $\E Y_{ij}^{2s-1} = 0, s \geq 1,$ it follows that $S_2 = 0$.

For the graphs from categories $C_1$ and $C_3$ we introduce further notations. Let us consider the $\Gamma(k,t)$-graph $G(\vect i)$.
Without loss of generality we assume that $i_l, l = 1, ..., t$ are distinct coordinates of the
vector $\vect i$ and define a vector $\vect i_t = (i_1, ... , i_t)$. We also set $G(\vect i_t):=G(\vect i)$.
Let $\tilde{\vect \imath}_t = (i_1, ... , i_{q-1}, i_{q+1}, ... , i_{t})$ and
$\hat{\vect \imath}_t = (i_1, ... , i_{p-1},i_{p+1}, ... , i_{q-1},i_{q+1},...,i_{t})$ be vectors derived from $\vect i_t$ by deleting the elements in the position $q$ and
$p, q$ respectively. We additionally assume that the coordinates of $\hat{\vect \imath}_t$ do not coincide with $i_p$. We denote the graph without the vertex $i_q$ and all edges linked to it by $G(\tilde{\vect \imath}_t)$. If the vertex $i_q$ is incident to a loop we denote by $G'(\vect i_t)$ the graph with this loop removed. By $\tilde{G}(\vect i_t)$ we mean the graph derived from $G(\vect i_t)$ by deleting the edge from $i_p$ to $i_q$ taking into account the multiplicity.

Now we will estimate the term $S_3$. For a graph from category $C_3$ we know that $k$ has to be even, i.e. $k = 2m$, say. We illustrate the example of a $\Gamma_3(k,t)$-graph in Figure~\ref{fig:graph3}. This graph corresponds to the term $Y(G(\vect i_3)) = Y_{i_1 i_1}^2 Y_{i_1 i_2}^2 Y_{i_2 i_3}^4 Y_{i_3 i_3}^2$.
\begin{figure}
\begin{center}
\scalebox{.35}{\includegraphics{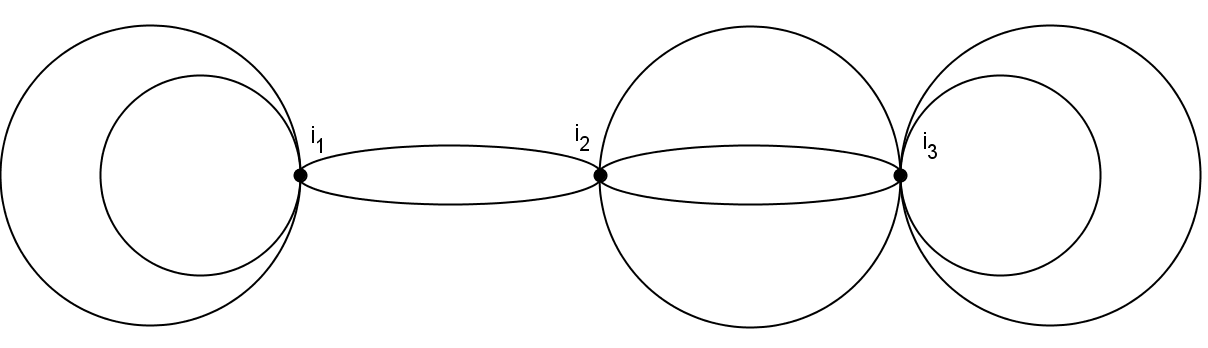}}
\end{center}
\caption{Graph $\Gamma_3(10,3)$.}
\label{fig:graph3}
\end{figure}

We mention that $\E Y_{i_p i_q}^{2s} \le_s \sigma_{i_p i_q}^{2s}$. Hence we may rewrite the terms which correspond to the graphs from category $C_3$ via variances.

Each graph $G$ from $C_3$ we can decompose into two graphs $G = G_1 \cup G_2$ in the following way. We will paint all edges into two colours such that all coincident edges have the same colour. We choose the graph $G_1$ such that one of the following cases holds true:\\
i) graph $G_1$ consists from only one vertex and all incident to it loops;\\
ii) graph $G_1$ consists from two vertices and the edge between them with the multiplicity greater then two; \\
iii) Each edge of the graph $G_1$ coincide with exactly one edge of the opposite direction and the graph of non coincident edges forms a simple cycle.

We may assume that the sum of multiplicities of all edges from $G_1$ is equal $2s$. It remains to consider the remaining $2(m - s)$ edges from the graph $G_2$.

Denote by $V_1, E_1$ and $V_2, E_2$ the sets of coordinates and edges of the graphs $G_1$ and $G_2$ respectively. We define $\vect i_t^1: = (i_l, i_l \in V_1)$. Now we may fix the graph $G_1$ and consider the graph $G_2$.

a) If the set $E_2$ is empty, then one should go to the step c). Otherwise we can consider the following opportunities:
\begin{enumerate}
\item {There is a loop from $G_2$ incident to the vertex $i_q \in V_2$ with the multiplicity $2a, a \geq 1$. In this case we estimate
\begin{equation*}
\qquad \frac{1}{n^{m+1}}\sum_{G(\vect i) \in \Gamma(2m,t)} \E[Y(G(\vect i))] \le_a \frac{1}{n^{m+1}}\sum_{\vect i_t} \E[Y(G'(\vect i_t))] \sigma_{i_q i_q}^{2a}.
\end{equation*}
Applying $a$ times the inequality $n^{-1} \sigma_{i_q i_q}^2 \le B_{i_q}^2$ and the condition~\eqref{eq: variance prop 3}, we delete all loops incident to this vertex;}
\item {There are no loops incident to the vertex $i_q \in V_2 \setminus V_1$, but $i_q$ is connected with only one vertex $i_p \in V_2$ by an edge of the graph $G_2$ and the multiplicity of this edge is equal $2b, b \geq 1$. In this case we estimate
\begin{equation*}
\qquad \qquad\frac{1}{n^{m+1}}\sum_{G(\vect i) \in \Gamma(2m,t)} \E[Y(G(\vect i))] \le_b \frac{1}{n^{m+1}}\sum_{\tilde{\vect \imath}_t} \E[Y(G(\tilde{\vect \imath}_t))] \sum_{i_q=1}^n\sigma_{i_p i_q}^{2b}.
\end{equation*}
Here we may use $b-1$ times the inequality $n^{-1} \sigma_{i_p i_q}^2 \le B_{i_p}^2$ and condition~\eqref{eq: variance prop 3} and consequently delete all coinciding edges except two. Then we may again apply condition~\eqref{eq: variance prop 3} to delete $i_q$;}
\item {There are no loops incident to some vertex from $V_2$ and no vertices in $V_2\setminus V_1$ which are connected with only one vertex from $V_2$. Then we can take any two vertices, let's say $i_p$ and $i_q$ from $V_2$ and estimate
\begin{equation*}
\qquad \qquad\frac{1}{n^{m+1}}\sum_{G(\vect i) \in \Gamma(2m,t)} \E[Y(G(\vect i))] \le_b \frac{1}{n^{m+1}}\sum_{\vect i_t} \E[Y(\tilde{G}(\vect i_t))] \sigma_{i_p i_q}^{2c},
\end{equation*}
where the multiplicity of the edge between $i_p$ and $i_q$ is equal $2c, c \geq 1$. Here we may use $c$ times the inequality $n^{-1} \sigma_{i_p i_q}^2 \le B_{i_p}^2$ and the condition~\eqref{eq: variance prop 3} and consequently delete all coinciding edges between $i_p$ and $i_q$;}
\end{enumerate}
b) go to step a);\\
c) It is easy to see that each time on the step a) we use the same bound~\eqref{eq: variance prop 3}. Hence we will have
\begin{equation}\label{eq: estimate for the term C 3}
\frac{1}{n^{m+1}}\sum_{G(\vect i) \in \Gamma(2m,t)} \E[Y(G(\vect i))] \le_m \frac{C^{2(m-s)} n^{m-s}}{n^{m+1}} \sum_{\vect i_t^1}\E[Y(G_1(\vect i_t^1))].
\end{equation}
It remains to estimate the right hand side of~\eqref{eq: estimate for the term C 3}. In the case i) we may estimate
\begin{align*}
&\frac{1}{n^{s+1}} \sum_{i_1=1}^n \E Y_{i_1 i_1}^{2s} \le_s \frac{1}{n^{s+1}} \sum_{i_1=1}^n \sigma_{i_1 i_1}^{2s} \le \frac{C^{2(s-1)}}{n^{2}} \sum_{i_1=1}^n \sigma_{i_1 i_1}^2  \\
&\le C^{2(s-1)}  \tau_n^2  + \frac{C^{2(s-1)} }{n^2} \sum_{i_1=1}^n \E Y_{i_1 i_1}^2 \one(|Y_{i_1 i_1}| \geq \tau_n \sqrt n )  \nonumber\\
&\le C^{2(s-1)} \tau_n^2 + C^{2(s-1)}  L_n(\tau_n) = o_s(1), \nonumber
\end{align*}
where we have used the inequality $n^{-1}\sigma_{i_1 i_1}^2 \le B_{i_1}^2$  and~\eqref{eq: variance prop 3}.

In the case ii) we will use the bound
\begin{align*}
&\frac{1}{n^{s+1}}\sum_{\substack{i_1,i_2=1\\i_1 \neq i_2}}^n \E Y_{i_1 i_2}^{2s} \le_s \frac{1}{n^{s+1}}\sum_{i_1,i_2=1}^n \sigma_{i_1 i_2}^{2s} \le \frac{C^{2(s-2)}}{n^{3}} \sum_{i_1,i_2=1}^n \sigma_{i_1 i_2}^4  \\
&\le C^{2(s-2)} \frac{\tau_n^2}{n^2} \sum_{i_1,i_2=1}^n \sigma_{i_1 i_2}^2 +\frac{C^{2(s-2)}}{n^3} \sum_{i_1,i_2=1}^n \sigma_{i_1 i_2}^2 \E Y_{i_1 i_2}^2 \one(|Y_{i_1 i_2}| \geq \tau_n \sqrt n) \nonumber \\
&\le C^{2(s-1)} \tau_n^2 + C^{2(s-1)} L_n(\tau_n) = o_s(1), \nonumber
\end{align*}
where we have used the inequality $n^{-1} \sigma_{i_1 i_2}^2 \le B_{i_1}^2$ and~\eqref{eq: variance prop 3}.

It remains to consider the case iii) only. We need to introduce further
notation for this situation. We may redenote the vertices from the set $V_1$
and assume that $\vect i_t^1 = \vect i_s:=(i_1, ... , i_s)$. By $G_1(\vect
i_s)$ we denote the graph $G_1(\vect i_t^1)$. Using the previous notations of $\tilde{\vect \imath}_s, \hat{\vect \imath}_s$ and $\tilde{G}_1(\vect i_s)$ we set $p = 1$ and $q = 2$. Finally by $\hat{G}_1(\tilde{\vect \imath}_s)$ we denote the graph derived from $\tilde{G}_1(\vect i_s)$ by deleting the vertex $i_2$ and the edge between $i_2$ and some another vertex $i_x, 2 < x \le s$. We may write
\begin{align}\label{eq: estimate for the term C 3 part 1}
&\frac{1}{n^{s+1}}\sum_{\vect i_s} \E Y(G_1(\vect i_s)) \le \frac{\tau_n^2}{n^{s}} \sum_{\vect i_s}\E Y(\tilde{G}_1(\vect i_s)) \\
&+ \frac{1}{n^{s+1}} \sum_{\vect i_s}\E Y(\tilde{G}_1(\vect i_s)) \E Y_{i_1 i_2}^2 \one(|Y_{i_1 i_2}| \geq \tau_n \sqrt n).\label{eq: estimate for the term C 3 part 2}
\end{align}
First we estimate the right hand side of~\eqref{eq: estimate for the term C 3 part 1}. The graph of the non coincident edges of $\tilde{G}_1(\vect i_s)$ forms a tree. We can sequently delete all vertices and edges from $\tilde{G}_1(\vect i_s)$ using the assumption~\eqref{eq: variance prop 3} on each step. We derive the bound
\begin{equation}\label{eq: estimate for the term C 3 part 3}
\frac{\tau_n^2}{n^{s}} \sum_{\vect i_s}\E Y(\tilde{G}_1(\vect i_s) \le C^{2(s-1)} \tau_n^2.
\end{equation}
For the term~\eqref{eq: estimate for the term C 3 part 2} we may write
\begin{align}\label{eq: estimate for the term C 3 part 4}
&\frac{1}{n^{s+1}} \sum_{\vect i_s}\E Y(\tilde{G}_1(\vect i_s)) \E Y_{i_1 i_2}^2 \one(|Y_{i_1 i_2}| \geq \tau_n \sqrt n) \\
&\le \frac{1}{n^{s+1}} \sum_{\vect i_s}\E Y(\hat{G}_1(\tilde{\vect \imath}_s)) \sigma_{i_2 i_x}^2 \E Y_{i_1 i_2}^2 \one(|Y_{i_1 i_2}| \geq \tau_n \sqrt n) \nonumber\\
&\le \frac{C^2}{n^{s}} \sum_{i_1, i_2 = 1}^n\E Y_{i_1 i_2}^2 \one(|Y_{i_1 i_2}| \geq \tau_n \sqrt n) \sum_{\hat{\vect \imath}_s} \E Y(\hat{G}_1(\tilde{\vect \imath}_s)) \nonumber.
\end{align}
Again using~\eqref{eq: variance prop 3} one may show that
\begin{equation}\label{eq: estimate for the term C 3 part 5}
\sum_{\hat{\vect \imath}_s} \E Y(\hat{G}_1(\tilde{\vect \imath}_s)) \le C^{2(s-2)}n^{s-2}.
\end{equation}
By~\eqref{eq: estimate for the term C 3 part 4} and~\eqref{eq: estimate for the term C 3 part 5} we have
\begin{equation}\label{eq: estimate for the term C 3 part 6}
\frac{1}{n^{s+1}} \sum_{\vect i_s}\E Y(\tilde{G}_1(\vect i_s)) \E Y_{i_1 i_2}^2 \one(|Y_{i_1 i_2}| \geq \tau_n \sqrt n) \le C^{2(s-1)} L_n(\tau_n).
\end{equation}
From~\eqref{eq: estimate for the term C 3 part 3} and~\eqref{eq: estimate for the term C 3 part 6} we derive the estimate
$$
\frac{1}{n^{s+1}}\sum_{\vect i_s} \E Y(G_1(\vect i_s)) \le C^{2(s-1)} \tau_n^2 + C^{2(s-1)} L_n(\tau_n).
$$
Finally for the cases i)--iii) we will have
$$
\frac{1}{n^{m+1}}\sum_{G(\vect i) \in \Gamma(2m,t)} \E[Y(G(\vect i))] \le_m C^{2(m-1)} (\tau_n^2 + L_n(\tau_n)) = o_m(1).
$$
As an example we recommend to check this algorithm for the graph in Figure~\ref{fig:graph3}.

It is easy to see that the number of different canonical graphs in $C_3$ is of order $O_m(1)$. Finally for the term $S_3$ we get
$$
S_3 =  o_m(1).
$$
It remains to consider the term $S_1$. For a graph from category $C_1$ we know that $k$ has to be even, i.e. $k = 2m$, say.
In the category $C_1$ using the notations of $\vect i_t, \tilde{\vect \imath}_t$ and $\hat{\vect \imath}_t$ we take $t = m+1$.

We illustrate on the left part of Figure~\ref{fig:graph} an example of the tree of noncoincident edges of a $\Gamma_1(2m)$-graph for $m = 5$. The term corresponding to this tree is
$Y(G(\vect i_6)) = Y_{i_1 i_2}^2 Y_{i_2 i_3}^2 Y_{i_2 i_4}^2 Y_{i_1 i_5}^2 Y_{i_5 i_6}^2$.
\begin{figure}
\begin{center}
\scalebox{.4}{\includegraphics{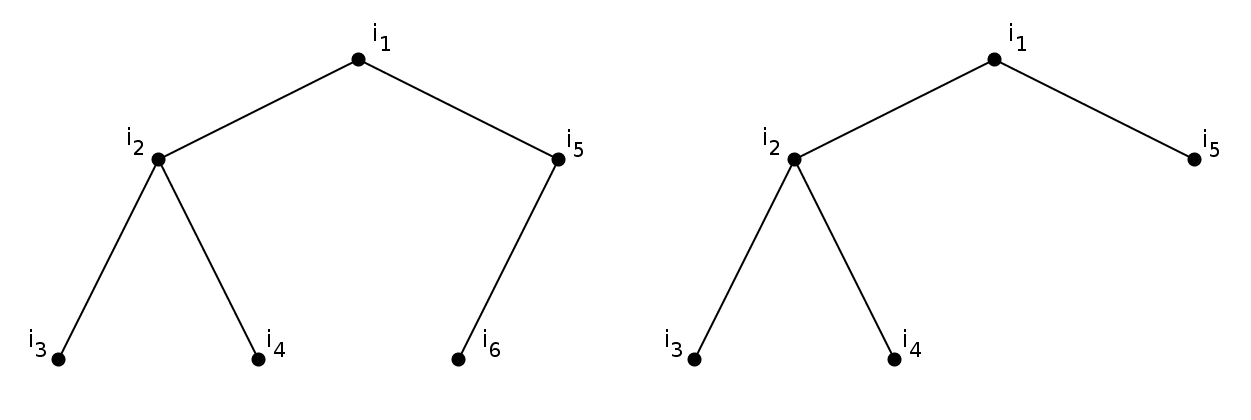}}
\end{center}
\caption{ On the left, the tree of noncoincident edges of a $\Gamma_1(10)$-graph is shown.
On the right, the tree of noncoincident edges of a $\Gamma_1(10)$-graph with deleted leaf $i_6$ is shown.}
\label{fig:graph}
\end{figure}

We denote by $\sigma^2(\vect i_{m+1}) = \sigma^2(G(\vect i_{m+1}))$ the product of $m$ numbers $\sigma_{i_s i_t}^2$,
where $i_s,i_t, s < t$ are vertices of the graph $G(\vect i_{m+1})$ connected by edges of this graph.
In our example, $\sigma^2(\vect i_{m+1}) = \sigma^2(\vect i_6) = \sigma_{i_1 i_2}^2 \sigma_{i_2 i_3}^2 \sigma_{i_2 i_4}^2 \sigma_{i_1 i_5}^2 \sigma_{i_5 i_6}^2$.

If $m = 1$ then $\sigma^2(\vect i_2) = \sigma_{i_1 i_2}^2$ and
\begin{equation}\label{eq: variance first step}
\frac{1}{n^{2}} \sum_{\substack{i_1, i_2 = 1 \\ i_1 \neq i_2}}^n \sigma_{i_1 i_2}^2 = \frac{1}{n}  \sum_{i_1 = 1}^n \left [\frac{1}{n} \sum_{i_2=1}^n \sigma_{i_1 i_2}^2 - 1 \right ] + 1 + o(1),
\end{equation}
where we have used $n^{-2} \sum_{i_1 = 1}^n \sigma_{i_1 i_1}^2 = o(1)$.
By~\eqref{eq: variance prop 2} the first term is of order $o(1)$. The number of canonical graphs in $C_1$ for $m = 1$ is equal to $1$.
We conclude for $m = 1$ that
$$
S_1 = n^{-2} \sum_{\Gamma_1(2)} \sum_{\substack{i_1, i_2 = 1 \\ i_1 \neq i_2}}^n \sigma_{i_1 i_2}^2  = 1 + o(1),
$$
Now we assume that $m > 1$. Let's consider the tree of non-coincident edges of the graph. We can find a leaf in the tree, let's say $i_q$, and a vertex $i_p$, which is connected to $i_q$ by an edge of this tree.
We have $\sigma^2(\vect i_{m+1}) = \sigma^2(\tilde{\vect \imath}_{m+1}) \cdot \sigma_{i_p i_q}^2$, where $\sigma^2(\tilde{\vect \imath}_{m+1}) = \sigma^2(G(\tilde{\vect \imath}_{m+1}))$.

In our example we can take the leaf $i_6$. On the right part of Figure~\ref{fig:graph}
we have drawn the tree with deleted leaf $i_6$. We have $\sigma_{i_p i_q}^2 = \sigma_{i_5 i_6}^2$ and $\sigma^2(\tilde{\vect \imath}_6) = \sigma_{i_1 i_2}^2 \sigma_{i_2 i_3}^2 \sigma_{i_2 i_4}^2 \sigma_{i_1 i_5}^2$.
\begin{align}
&\frac{1}{n^{m+1}} \sum_{\vect i_{m+1}} \sigma^2(\vect i_{m+1}) =\frac{1}{n^{m+1}}\sum_{\tilde{\vect \imath}_{m+1}} \sigma^2(\tilde{\vect \imath}_{m+1})\sum_{i_q = 1}^n \sigma_{i_p i_q}^2 + o_m(1) \nonumber\\
& = \frac{1}{n^m} \sum_{\tilde{\vect \imath}_{m+1}}\sigma^2(\tilde{\vect \imath}_{m+1})\left[ \frac{1}{n} \sum_{i_q = 1}^n \sigma_{i_p i_q}^2 - 1 \right ] \label{eq:main term_1}\\
&+ \frac{1}{n^m} \sum_{\tilde{\vect \imath}_{m+1}}\sigma^2(\tilde{\vect \imath}_{m+1})\label{eq:main term_2}\\
&+ o_m(1),\nonumber
\end{align}
where we have added some graphs from category $C_3$ and use the similar bounds as for $S_3$ term. Now we will show that the term~\eqref{eq:main term_1} is of order $o_m(1)$. Note that
\begin{align} \label{eq: first term}
&\frac{1}{n^m} \sum_{\tilde{\vect \imath}_{m+1}}\sigma^2(\tilde{\vect \imath}_{m+1})\left | \frac{1}{n} \sum_{i_q = 1}^n \sigma_{i_p i_q}^2 - 1 \right |  \\
&=\frac{1}{n} \sum_{i_p = 1}^n \left | \frac{1}{n} \sum_{i_q = 1}^ n \sigma_{i_p i_q}^2 - 1 \right | \frac{1}{n^{m-1}}\sum_{\hat{\vect \imath}_{m+1}} \sigma^2(\tilde{\vect \imath}_{m+1}) \nonumber.
\end{align}
We can sequentially delete leafs from the tree and using~\eqref{eq: variance prop 3} write the bound
\begin{equation} \label{eq:first term 2}
\frac{1}{n^{m-1}} \sum_{\hat{\vect \imath}_{m+1}}\sigma^2(\tilde{\vect \imath}_{m+1}) \le C^{2(m-1)}.
\end{equation}
By~\eqref{eq:first term 2} and~\eqref{eq: variance prop 2} we have shown that~\eqref{eq:main term_1} is of order $o_m(1)$. For the second term~\eqref{eq:main term_2}
we can repeat the above procedure and stop after $m-1$ steps when we arrive at only two vertices in the tree. In the last step we can use the result~\eqref{eq: variance first step}.
Finally we get
$$
S_1 = \frac{1}{n^{m+1}} \sum_{\Gamma_1(2m)}\sum_{\vect i_{m+1}} \sigma^2(\vect i_{m+1}) = \frac{1}{m+1}\binom{2m}m + o_m(1),
$$
which proves Theorem~\ref{th: gaussian case}.


\bibliographystyle{plain}
\bibliography{literatur}
\end{document}